\newtheorem{theorem}{Theorem}[section]
\newtheorem{proposition}[theorem]{Proposition}
\newtheorem{example}[theorem]{Example}
\theoremstyle{definition}
\newtheorem{definition}[theorem]{Definition}
\theoremstyle{remark}
\newtheorem{remark}[theorem]{Remark}
\numberwithin{equation}{section}
\newcommand{\K}{\mathbb K}
\begin{document}

\title[ Ternary $q$-Virasoro-Witt  Hom-Nambu-Lie  algebras]
{ Ternary  $q$-Virasoro-Witt  Hom-Nambu-Lie  algebras}%
\author{F. Ammar, A. Makhlouf and S. Silvestrov}%
\address{Abdenacer Makhlouf  Universit\'{e} de Haute Alsace,  Laboratoire de Math\'{e}matiques, Informatique et Applications,
4, rue des Fr\`{e}res Lumi\`{e}re F-68093 Mulhouse, France}%
\email{Abdenacer.Makhlouf@uha.fr}
\address{Sergei Silvestrov, Centre for Mathematical Sciences,  Lund University, Box
   118, SE-221 00 Lund, Sweden}
\email{sergei.silvestrov@math.lth.se}
\address{Faouzi Ammar, Universit{\'{e}} de Sfax, Facult{\'{e}} des Sciences, B.P.
1171, 3000 Sfax, Tunisia} \email{Faouzi.Ammar@rnn.fss.tn}

\thanks {This work was partially supported by the Swedish Research Council and the SIDA Foundation, the Crafoord Foundation, the Swedish Foundation for International Cooperation in Research and
Higher Education (STINT), the Royal Swedish Academy of Sciences, Lund University, Universit\'{e} de Haute Alsace in Mulhouse and University of Sfax.}

 \subjclass[2000]{17B68,16E40,17B37,16S80}
\keywords{Ternary Nambu-Lie  algebra, ternary Hom-Nambu-Lie
algebra,
 ternary Witt-Virasoro  algebra}
\date{}
%
\begin{abstract}
In this paper we construct ternary $q$-Virasoro-Witt   algebras which  $q$-deform  the ternary Virasoro-Witt
algebras constructed  by Curtright, Fairlie and Zachos using $su(1,1)$ enveloping algebra techniques.  The ternary Virasoro-Witt
algebras constructed by Curtright, Fairlie and Zachos depend on a parameter and are not Nambu-Lie algebras for all but finitely many values of this parameter.
For the parameter values for which the  ternary Virasoro-Witt
algebras are Nambu-Lie, the corresponding ternary $q$-Virasoro-Witt
algebras constructed in this article are also Hom-Nambu-Lie because they are obtained from the ternary Nambu-Lie algebras using the composition method. For other parameter values this composition method does not yield Hom-Nambu Lie algebra structure for $q$-Virasoro-Witt algebras. We show however, using a different construction, that the ternary Virasoro-Witt algebras of Curtright, Fairlie and Zachos, as well as the general ternary $q$-Virasoro-Witt algebras we construct,
carry a structure of ternary Hom-Nambu-Lie algebra for all values of the involved parameters.
\end{abstract}
\maketitle

\section*{Introduction}
Lie algebras and Poisson algebras have played an extremely
important role in physics for a long time. Their generalizations,
known as $n$-Lie algebras and ``Nambu algebras''
\cite{n:generalizedmech,f:nliealgebras,Takhtajan} also arise naturally in physics and have recently been studied in the context of ``M-branes'' \cite{BL2007,HoppeJMalgNambumech}. It turns out that in the dynamic study of strings and M-branes appears naturally an algebra with ternary multiplication  called Bagger-Lambert algebra. It was used in \cite{BL2007} as one of the main ingredients in the construction of a new type of supersymmetric gauge theory that is consistent with all the symmetries
expected of a multiple M2-brane theory: 16 supersymmetries, conformal invariance, and an SO(8) R-symmetry that acts on the eight transverse scalars. Hundreds of papers are dedicated to Bagger-Lambert algebra by now. Other applications of Nambu algebras to M-branes, quantization of Nambu mechanics, volume preserving diffeomorphisms, integrable systems and related generalization of Lax equation have been considered in \cite{HoppeJMalgNambumech}.

A long-standing problem related to Nambu algebras is their
quantization. For Poisson algebras, the problem of finding an operator
algebra where the commutator Lie algebra corresponds to the Poisson
algebra is a well-studied problem. For higher order algebras much less
is known and the corresponding problem seems to be hard.
Nambu-Lie algebra is defined in general by $n$-ary multi-linear multiplication which is skew symmetric (see \eqref{trilskewsym} for $n=3$) and satisfies an identity extending the Jacobi identity for the Lie algebras. For $n=3$ this identity is
\begin{equation*}\label{TernaryNambuIdentityOrdinary}
      [ x_{1},x_{ 2}, [x_{3},x_{4},x_{5}]] =
      [ [x_1,x_2,x_3 ],x_{4},x_{5}]
      + [ x_3,[ x_1,x_2,x_{4}] , x_{5}]+
      [x_3,x_{4},[x_1,x_2,x_{5}]].
  \end{equation*}
In Nambu-Lie algebras, the additional freedom in comparison with Lie algebras is  mainly limited to extra arguments in the multi-linear multiplication. The identities of Nambu-Lie algebras are also closely resembling the identities for Lie algebras. As a result, there is a close similarity between Lie algebras and  Nambu-Lie algebras in their appearances in connection to other algebraic and analytic structures and in the extent of their applicability.
Thus it is not surprising that it becomes unclear how to associate in meaningful ways ordinary Nambu-Lie algebras with the important in physics generalizations and quantum deformations of Lie algebras when typically the ordinary skew-symmetry and Jacobi identities of Lie algebras are violated. However, if the class of Nambu-Lie algebras is extended with enough extra structure beyond just adding more arguments in multi-linear multiplication, the natural ways of association of such multi-linear algebraic structures with generalizations and quantum deformations of Lie algebras may become feasible. Hom-Nambu-Lie algebras are defined by a similar but more general  identity than that of Nambu-Lie algebras involving some additional linear maps (see Definition \ref{def:hom-Nambu-Liealg}). These linear maps twisting or deforming the main identities introduce substantial new freedom in the structure allowing to consider Hom-Nambu-Lie algebras as deformations of Nambu-Lie algebras ($n$-Lie algebras). The extra freedom built into the structure of Hom-Nambu-Lie algebras may provide a path to quantization beyond what is possible for ordinary Nambu-Lie algebras.
All this gives also important motivation for investigation
of mathematical concepts and structures such as Leibniz $n$-ary algebras \cite{CassasLodayPirashvili,f:nliealgebras} and their modifications and extensions, as well as Hom-algebra extensions of Poisson algebras \cite{HomDeform}. For discussion of physical applications of these and related algebraic structures to models for elementary particles, and unification problems for interactions see \cite{Kerner7,Kerner,Kerner2,Kerner4,Kerner6}.

Generalizations of $n$-ary algebras of Lie type and associative type by twisting the identities using linear maps have been introduced in \cite{ams:gennambu}.
These generalizations include $n$-ary
Hom-algebra structures generalizing the $n$-ary algebras of Lie type
such as $n$-ary Nambu algebras, $n$-ary Nambu-Lie algebras and
$n$-ary Lie algebras, and $n$-ary algebras of associative type
such as $n$-ary totally associative and $n$-ary partially
associative algebras.
The general Hom-algebra structures arose first in connection to
quasi-deformation and discretizations of Lie algebras of vector
fields. These quasi-deformations lead to quasi-Lie algebras, a
generalized Lie algebra structure in which the skew-symmetry and
Jacobi conditions are twisted.  The first examples were concerned
with $q$-deformations of the Witt and Virasoro algebras (see for
example
\cite{AizawaSaito,ChaiElinPop,ChaiKuLukPopPresn,ChaiKuLuk,ChaiPopPres,CurtrZachos1,
  DaskaloyannisGendefVir,Hu,LiuKQuantumCentExt,LiuKQCharQuantWittAlg,LiuKQPhDthesis}).  Motivated by these and new examples
arising as applications of the general quasi-deformation
construction in \cite{HLS,LS1,LS2} on the one hand, and the desire
to be able to treat within the same framework such well-known
generalizations of Lie algebras as the color and  Lie superalgebras
on the other hand, quasi-Lie algebras and their subclasses of
quasi-Hom-Lie algebras and Hom-Lie algebras were introduced in
\cite{HLS,LS1,LS2,LS3}. In Hom-Lie algebras
skew-symmetry is untwisted, whereas the Jacobi identity is twisted
by a single linear map and contains three terms as for Lie algebras,
reducing to ordinary Lie algebras when the linear twisting map is
the identity map.  Hom-associative algebras replacing associative
algebras in the context of Hom-Lie algebras and also more general
classes of Hom-Lie admissible algebras, $G$-Hom-associative
algebras, where introduced in \cite{MS}.  The first steps in the
construction of universal enveloping algebras for Hom-Lie algebras
have been made in \cite{Yau:EnvLieAlg}.
Formal deformations and elements of
(co-)homology for Hom-Lie algebras have been studied in
\cite{HomDeform,Yau:HomolHomLie}, whereas dual structures such as Hom-coalgebras, Hom-bialgebras and Hom-Hopf algebras appeared first in
\cite{HomHopf,HomAlgHomCoalg} and further investigated in
\cite{IGSC,Yau:HomBial}.

The aim of this paper is to construct ternary $q$-Virasoro-Witt   algebras which $q$-deform  the ternary Virasoro-Witt algebras introduced   by Curtright, Fairlie and Zachos  \cite{CurtrFairlZachos} in such a way that they carry a structure of  ternary Hom-Nambu-Lie algebras for all values of involved parameters.
The ternary Virasoro-Witt
algebras constructed by Curtright, Fairlie and Zachos are  ternary Nambu-Lie algebras  only for two values of the parameter $z=\pm 2 i$. We show in this paper that for these values the corresponding  ternary $q$-Virasoro-Witt
algebras  are   ternary Hom-Nambu-Lie algebras as they are obtained from those ternary Nambu-Lie algebras using the composition with a homomorphism according to a general method described in \cite{ams:gennambu}. For the other values of the parameter $z$ this does not automatically yield the ternary Hom-Nambu-Lie structure since the algebras one starts with are not Nambu-Lie. We prove however that these ternary algebras of Curtright, Fairlie and Zachos, as well as  the general ternary $q$-Virasoro-Witt   algebras we construct, carry a structure of ternary Hom-Nambu-Lie algebra in other ways, for any values of the parameters.

The paper is organized as follows. In Section \ref{VWalgebras}, we summarize  the
definition of ternary Virasoro-Witt algebras constructed through the
use of $su(1,1)$ enveloping algebra techniques by Curtright, Fairlie
and Zachos and also describe a connection to  the ternary algebra introduced by T. A.  Larsson \cite{LarssonTA}.
In Section \ref{sec:NambuLieHomNambuLie}, we recall the definitions and properties of ternary
Nambu-Lie algebras and ternary Hom-Nambu-Lie algebras including also the composition method
of construction of ternary Hom-Nambu-Lie algebras from
ternary Nambu-Lie algebras. In Section \ref{HomNambuVW},
we describe a family of homomorphisms of the a ternary Nambu-Lie algebras
of Curtright, Fairlie
and Zachos and use this and the composition method to define
$q$-Virasoro-Witt algebras. Section \ref{sec:HomNambuLieTernqVirWitt} is devoted
to  the structure
of ternary Virasoro-Witt algebras. We show that for any value of the
parameter they carry a Hom-Nambu-Lie structure. Also we describe all
the twisting maps of a specific natural type. Finally, in Section \ref{sec:TernHomNambuLieTernNambuLie} we discuss
in more general situation the problem of  ternary Hom-Nambu-Lie
algebras induced by ternary Nambu-Lie algebras and make some
observations about ternary q-Virasoro-Witt algebras.
Throughout this paper, $\K$ will denote an algebraically closed field of characteristic zero.

\section{Ternary Virasoro-Witt algebras}\label{VWalgebras}
In this Section, we define  ternary Virasoro-Witt algebras
constructed through the use of $su(1,1)$ enveloping algebra
techniques by Curtright, Fairlie and Zachos \cite{CurtrFairlZachos}.

\begin{definition}
 The ternary algebras given by the linear space $W$, generated by
 $\{ Q_n,R_n\}_{n\in\mathbb{Z}}$, with ternary trilinear multiplication defined by
skewsymmetric ternary brackets
\begin{eqnarray}\label{CFZbracket1}
[ Q_k,Q_m,Q_n]&=& (k-m)(m-n)(k-n) R_{k+m+n}\\
\label{CFZbracket2} [Q_k,Q_m,R_n]&=& (k-m)(Q_{k+m+n}+ z n R_{k+m+n})\\
\label{CFZbracket3} [Q_k,R_m,R_n]&=& (n-m) R_{k+m+n}\\
\label{CFZbracket4} [R_k,R_m,R_n]&=& 0
\end{eqnarray}
are called \emph{ternary Virasoro-Witt algebras}.
\end{definition}
\begin{remark}
Actually the previous ternary algebra is a ternary Nambu-Lie algebra
only in the cases $z=\pm 2 i$. In Section \ref{HomNambuVW}, we
show that they carry for any $z$ a structure of ternary
Hom-Nambu-Lie algebras.
\end{remark}

Let us recall the connection to the ternary algebras considered by T. A. Larsson  in \cite{LarssonTA}.   He considered the operators
\begin{equation}
  E_m = e^{i m x}, \quad  \quad
  L_m = e^{i m x}(-i \frac{d}{dx}+\lambda m).
\end{equation}
Here $E_m$ should be understood as operator of multiplication by
$e^{imx}$ on a suitable space of functions.

These operators satisfy the following commutation relations
$$[L_m,L_n]=(n-m)L_{m+n},\quad [E_m,E_n]=0,\quad
[L_m,E_n]=n E_{m+n}.
$$
where the braket means the usual commutator of linear operators $[A,B]=AB-BA$.

In any associative algebra with multiplication $"\cdot"$,
define the ternary multi-linear multiplication (bracket) by
\begin{equation}\label{NambuCommutator1}
\begin{array}{l}
[x,y,z] = x\cdot [y,z]+y\cdot [z,x]+z\cdot [x,y]\\
= x\cdot(y\cdot z)-x\cdot(z\cdot y)+y\cdot(z\cdot x)-y\cdot(x\cdot z)+z\cdot(x\cdot y)-z\cdot(y\cdot x)
\end{array}
\end{equation}
where $[\cdot,\cdot]$ denotes the binary commutator bracket for its corresponding Lie algebra ($[a,b]=a\cdot b - b\cdot a$).
Using \eqref{NambuCommutator1}, one gets
\begin{eqnarray}\label{Lars1}
 [L_k,L_m,L_n]&=& (\lambda-\lambda ^2)(k-m)(m-n)(n-k) E_{k+m+n}\\
\label{Lars2} \  [L_k,L_m,E_n]&=& (m-k)(L_{k+m+n}+(1- 2\lambda) n E_{k+m+n})\\
\label{Lars3} \  [L_k,E_m,E_n]&=& (m-n) E_{k+m+n}\\
\label{Lars4}\  [ E_k,E_m,E_n]&=& 0
\end{eqnarray}

 The brackets
\eqref{CFZbracket1}-\eqref{CFZbracket4} are obtained by taking, for $\lambda\neq 0$ and $\lambda\neq 1$
$$L_m=-(\lambda-\lambda ^2)^{1/4} Q_m, \quad
E_m=(\lambda-\lambda ^2)^{-1/4} R_m, \quad z=-\frac{1-2
\lambda}{(\lambda-\lambda ^2)^{1/2}}.
$$
However, exactly the ternary Nambu-Lie algebras of Curtright, Fairlie and Zachos ($z=\pm 2i$) cannot be obtained from \eqref{Lars1}-\eqref{Lars4} in this way because $z=-\frac{1-2
\lambda}{(\lambda-\lambda ^2)^{1/2}}$  leads to $(4+z^2)\lambda^2-(4+z^2)\lambda +1=0$ which has no solution for $z=\pm 2i$.

For $z\neq \pm 2i$, using that change of generators, we obtain
a realization of ternary Virasoro-Witt algebras of Curtright,
Fairlie and Zachos by the following operators
\begin{eqnarray*}
  Q_m &=&-(\lambda-\lambda ^2)^{-1/4} e^{i m x} \\
  R_m &=& e^{i m x}(-i (\lambda-\lambda ^2)^{1/4}  \frac{d}{dx}+\lambda (\lambda-\lambda ^2)^{1/4}  m).
\end{eqnarray*}
Note that these operators also satisfy  the following commutation relations
\begin{gather*}
[Q_m,Q_n]=-(\lambda-\lambda ^2)^{-1/4} (n-m)Q_{m+n}, \quad [R_m,R_n]=0, \\
[Q_m,R_n]=-(\lambda-\lambda ^2)^{-1/4} n R_{m+n}.
\end{gather*}
which also lead to \eqref{CFZbracket1}-\eqref{CFZbracket4} using \eqref{NambuCommutator1}.
In the presentation of Larsson appeared also a second order differential operators $S_m = e^{i m x}(-i \frac{d}{dx}+\lambda m)^2$. These operators can be expressed in several ways using operators $E_m$ and $L_m$. For example $S_m = L_m E_{-m} L_m$.
They are not needed to recover the ternary  Virasoro-Witt algebras.

\begin{remark}
One may notice that the ternary bracket  \eqref{NambuCommutator1}
does not lead automatically to ternary Nambu-Lie algebra when
starting from an associative algebra and the corresponding Lie
algebra given by the binary commutators. Another way to construct ternary Nambu Lie algebras starting from a Lie algebra and a trace function could be found in \cite{Awata}. This construction have been extended to ternary Hom-Nambu-Lie algebras in \cite{AMSternaryCommutator2009}.
\end{remark}

\section{Nambu-Lie ternary algebras and Hom-Nambu-Lie ternary algebras} \label{sec:NambuLieHomNambuLie}

In this Section, we recall the definitions of introduced in
\cite{ams:gennambu} ternary Hom-Nambu algebras and ternary
Hom-Nambu-Lie algebras generalizing the usual  ternary  Nambu algebras and ternary   Nambu-Lie algebras (called also Filippov ternary
algebras).
\begin{definition}
A \emph{ternary  Hom-Nambu algebra} is a triple $(V,[\cdot,\cdot,\cdot],\alpha)$,
consisting of a vector space $V$, a trilinear map $[\cdot,\cdot,\cdot]:V\times V\times V\rightarrow V$
and a pair $\alpha=(\alpha_{1},\alpha_{2})$ of linear maps
$\alpha_{i}:V\rightarrow V$, $i=1,2$ satisfying
\begin{equation} \label{TernaryNambuIdentity}
\begin{array}{l}
[ \alpha_1(x_{1}),\alpha_{2}(x_{ 2}), [x_{3},x_{4},x_{5}]]=
 [ [x_1,x_2,x_3 ],\alpha_1(x_{4}),\alpha_{2}(x_{5})] + \\[0cm]
\quad \quad \quad \quad  [ \alpha_{1} (x_3),[ x_1,x_2,x_{4}] , \alpha_{2}(x_{5})]+
[\alpha_1(x_3),\alpha_{2}(x_{4}),[x_1,x_2,x_{5}]].
\end{array}
\end{equation}
\end{definition}
The identity (\ref{TernaryNambuIdentity}) is called Hom-Nambu
identity.

\begin{remark}
When the maps $(\alpha_{i})_{i=1,2}$ are all identity maps,  one
recovers the classical ternary Nambu algebras and the identity
 \eqref{TernaryNambuIdentity} is called  Nambu identity. The Nambu identity
  is also known
 as the fundamental identity or Filippov identity
\cite{f:nliealgebras, n:generalizedmech, Takhtajan}.
\end{remark}

\begin{remark}
Let $(V,[\cdot,\cdot,\cdot],\alpha)$ be a ternary Hom-Nambu algebra
 where $\alpha=(\alpha_{1},\alpha_{2})$. Let $x=(x_1,x_{2})\in
V\times V$, $\alpha (x)=(\alpha_1(x_1),\alpha_{2}(x_{2}))\in V\times
V$ and $y\in V$. Let $L_x$ be a linear map on $V$,
defined   by  $$L_{x}(y)=[x_{1},x_{2},y].$$
Then the Hom-Nambu identity is
\begin{gather*}
L_{\alpha (x)}( [x_{3},x_{4},x_{5}])= [L_{x}(x_{3}),\alpha_1(x_{4}),
 \alpha_{2}(x_{5})]+ \\[0cm]
[\alpha_1(x_{3}), L_{x}(x_{4}),
\alpha_{2}(x_{5})]+[\alpha_1(x_{3}), \alpha_{2}(x_{4}, L_{x}(x_{5})].
\end{gather*}%
\end{remark}

For the rest of the paper, we will use the following handy notation
for the difference between the left hand side and the right hand
side of the Nambu identity for the ternary Nambu algebras:
\begin{eqnarray}\label{LHSminusRHSTernaryNambuIdentity}
FI(x_{1},x_{2},x_{3},x_{4},x_{5})&=[ x_{1},x_{ 2},
[x_{3},x_{4},x_{5}]]- [ [x_1,x_2,x_3 ],x_{4},x_{5}]\\ \ & - [ x_3,[
x_1,x_2,x_{4}] , x_{5}]- [x_3,x_{4},[x_1,x_2,x_{5}]], \nonumber
\end{eqnarray}
and analogous notation for the Hom-Nambu identity of Hom-Nambu algebras
\begin{equation} \label{LHSminusRHSTernaryHomNambuIdentity}
\begin{array}{l}
HFI(x_{1},x_{2},x_{3},x_{4},x_{5})  =[ \alpha_1
(x_{1}),\alpha_{2}(x_{ 2}), [x_{3},x_{4},x_{5}]] \\
 \quad \quad - [ [x_1,x_2,x_3],\alpha_1 (x_{4}),\alpha_{2}(x_{5})] - [ \alpha_1 (x_3),[
x_1,x_2,x_{4}] , \alpha_{2}(x_{5})] \\
 \quad \quad \quad \quad \quad \quad - [\alpha_1
(x_3),\alpha_{2}(x_{4}),[x_1,x_2,x_{5}]].
\end{array}
\end{equation}

\begin{definition} \label{def:hom-Nambu-Liealg}
A ternary  Hom-Nambu algebra $(V,[\cdot,\cdot,\cdot],\alpha)$, where
$\alpha=(\alpha_{1},\alpha_{2})$, is called \emph{ternary
Hom-Nambu-Lie algebra} if the bracket is skew-symmetric that is
\begin{equation}  \label{trilskewsym}
[x_{\sigma (1)}, x_{\sigma (2)}, x_{\sigma (3)}]=Sgn(\sigma )[x_{1},
x_{2}, x_{3}],\quad \forall \sigma \in \mathcal{S}_{3}\text{ and
}\forall x_{1}, x_{2}, x_{3}\in V
\end{equation}%
where $\mathcal{S}_{3}$ stands for the permutation group on $3$
elements.

In particular, the skewsymmetric ternary Nambu algebras are called ternary Nambu-Lie algebras
\end{definition}

The morphisms of ternary Hom-Nambu algebras  are defined in the
natural way. It should be pointed out however that the morphisms
should intertwine not only the ternary products but also the
twisting linear maps.

\begin{definition}
Let $(V,[\cdot,\cdot,\cdot],\alpha)$ and
$(V',[\cdot,\cdot,\cdot]',\alpha')$ be two $n$-ary Hom-Nambu
algebras (resp. $n$-ary Hom-Nambu-Lie algebras) where
$\alpha=(\alpha_{1},\alpha_{2})$ and
$\alpha'=(\alpha'_{1},\alpha'_{2})$. A linear map $\rho:
V\rightarrow V'$ is a ternary Hom-Nambu algebras morphism (resp.
ternary Hom-Nambu-Lie algebras morphism) if it satisfies
\begin{eqnarray*}\rho ([x_{1},
x_{2}, x_{3}])&=&
[\rho (x_{1}),\rho (x_{2}),\rho (x_{3})]'\\
\rho \circ \alpha_i&=&\alpha'_i\circ \rho \quad \forall i=1,2.
\end{eqnarray*}
\end{definition}

The following theorem, given in \cite{ams:gennambu} for $n$-ary algebras
of Lie type, provides a way to construct ternary Hom-Nambu algebra
(resp. ternary Hom-Nambu-Lie algebra) starting from ternary Nambu
algebra (resp. ternary Nambu-Lie algebra) and a ternary algebras
endomorphism.
\begin{theorem}[\cite{ams:gennambu}]\label{ThmConstrHomNambuLie}
Let $(V,[\cdot,\cdot,\cdot])$ be a ternary Nambu algebra (resp.
ternary Nambu-Lie algebra)  and let $\rho : V\rightarrow V$ be a
ternary Nambu (resp. ternary Nambu-Lie) algebras endomorphism.

We set $[\cdot,\cdot,\cdot]_\rho=\rho\circ [\cdot,\cdot,\cdot]$ and
$\widetilde{\rho}=(\rho,\rho)$. Then
$(V,[\cdot,\cdot,\cdot]_\rho,\widetilde{\rho})$
  is a ternary Hom-Nambu algebra (resp.  ternary Hom-Nambu-Lie algebra).

Moreover, suppose that  $(V',[\cdot,\cdot,\cdot]_\rho')$ is another ternary
Nambu algebra (resp.  ternary Nambu-Lie algebra)  and
$\rho ' : V'\rightarrow V'$ is a ternary Nambu
(resp.  ternary Nambu-Lie) algebra endomorphism. If $f:V\rightarrow
V'$ is a ternary Nambu algebras morphism (resp. ternary Nambu-Lie
algebras morphism) that satisfies $f\circ\rho=\rho'\circ f$ then
$$f:(V,[\cdot,\cdot,\cdot]_\rho,\widetilde{\rho})\longrightarrow
(V',[\cdot,\cdot,\cdot]'_{\rho '},\widetilde{\rho '})
$$
is a ternary Hom-Nambu algebras morphism (resp.  ternary
Hom-Nambu-Lie algebras morphism).
\end{theorem}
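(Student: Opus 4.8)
The plan is to verify the Hom-Nambu identity \eqref{TernaryNambuIdentity} for the twisted bracket $[\cdot,\cdot,\cdot]_\rho$ directly, by substituting the definition and using both that $\rho$ is an endomorphism of $(V,[\cdot,\cdot,\cdot])$ and that the original bracket satisfies the (untwisted) Nambu identity, i.e. $FI(x_1,\dots,x_5)=0$ in the notation \eqref{LHSminusRHSTernaryNambuIdentity}. First I would compute the left-hand side of \eqref{TernaryNambuIdentity} with $\alpha_1=\alpha_2=\rho$: we have $[\rho(x_1),\rho(x_2),[x_3,x_4,x_5]_\rho]_\rho = \rho([\rho(x_1),\rho(x_2),\rho([x_3,x_4,x_5])])$, and since $\rho$ is an endomorphism this equals $\rho^2([x_1,x_2,[x_3,x_4,x_5]])$. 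The same manipulation applied to each of the three terms on the right-hand side produces $\rho^2$ applied to the three corresponding terms of the untwisted Nambu identity. Thus $HFI(x_1,\dots,x_5) = \rho^2\bigl(FI(x_1,\dots,x_5)\bigr)$, and the latter vanishes because $(V,[\cdot,\cdot,\cdot])$ satisfies the Nambu identity. This proves that $(V,[\cdot,\cdot,\cdot]_\rho,\widetilde\rho)$ is a ternary Hom-Nambu algebra.

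Next, for the Nambu-Lie case, I would observe that skew-symmetry \eqref{trilskewsym} is inherited for free: if the original bracket is skew-symmetric then so is $[x_1,x_2,x_3]_\rho=\rho([x_1,x_2,x_3])$, since permuting the arguments inside the original bracket only introduces the sign $Sgn(\sigma)$, which then factors out of $\rho$ by linearity. Hence $(V,[\cdot,\cdot,\cdot]_\rho,\widetilde\rho)$ is a ternary Hom-Nambu-Lie algebra whenever $(V,[\cdot,\cdot,\cdot])$ is a ternary Nambu-Lie algebra.

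Finally, for the morphism assertion, suppose $f:V\to V'$ satisfies $f([x_1,x_2,x_3])=[f(x_1),f(x_2),f(x_3)]'$ and $f\circ\rho=\rho'\circ f$. I would check the two conditions in the definition of a morphism of ternary Hom-Nambu algebras. For the bracket: $f([x_1,x_2,x_3]_\rho)=f(\rho([x_1,x_2,x_3]))=\rho'(f([x_1,x_2,x_3]))=\rho'([f(x_1),f(x_2),f(x_3)]')=[f(x_1),f(x_2),f(x_3)]'_{\rho'}$, using the intertwining relation and then that $f$ is a morphism for the untwisted brackets. For the twisting maps: since $\widetilde\rho=(\rho,\rho)$ and $\widetilde{\rho'}=(\rho',\rho')$, the required relation $f\circ\rho=\rho'\circ f$ for each component is exactly the hypothesis. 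The Nambu-Lie case needs no additional argument, as the skew-symmetry is not affected.

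There is no real obstacle here: every step is a routine unwinding of definitions, and the only mildly substantive point is the identity $HFI = \rho^2\circ FI$, which is the natural reason the construction works. The argument is essentially the ternary specialization of the general $n$-ary result of \cite{ams:gennambu}, so I would expect the write-up to be short.
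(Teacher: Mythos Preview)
Your argument is correct and is exactly the standard one: the key identity $HFI=\rho^{2}\circ FI$ follows immediately from the definition of $[\cdot,\cdot,\cdot]_\rho$ together with the endomorphism property of $\rho$, skew-symmetry is inherited by linearity of $\rho$, and the morphism check is a direct unwinding of the intertwining hypothesis. Note that the present paper does not actually prove this theorem; it is quoted from \cite{ams:gennambu}, so there is no in-paper proof to compare against, but your proposal coincides with the expected short verification.
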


\begin{example}
 An algebra $V$ consisting of polynomials or possibly of other differentiable functions in $3$ variables $x_{1},x_{2},x_{3},$ equipped with well-defined bracket multiplication given by the functional jacobian
 $J(f)=(\frac{\partial f_i}{\partial x_j})_{1\leq i,j \leq 3}$:
\begin{equation} \label{JacternaryNambuLie}
\lbrack f_{1},f_{2},f_{3}\rbrack=\det \left(
\begin{array}{ccc}
\frac{\partial f_{1}}{\partial x_{1}} & \frac{\partial
f_{1}}{\partial x_{2}} &
\frac{\partial f_{1}}{\partial x_{3}} \\
\frac{\partial f_{2}}{\partial x_{1}} & \frac{\partial
f_{2}}{\partial x_{2}} &
\frac{\partial f_{2}}{\partial x_{3}} \\
\frac{\partial f_{3}}{\partial x_{1}} & \frac{\partial
f_{3}}{\partial x_{2}} &
\frac{\partial f_{3}}{\partial x_{3}}%
\end{array}%
\right),
\end{equation}%
is a ternary  Nambu-Lie algebra. By considering a ternary Nambu-Lie
algebra endomorphism of such algebra, we construct a ternary
Hom-Nambu-Lie algebra. Let $\gamma(x_1,x_2,x_3)$ be a polynomial or
more general differentiable transformation of three variables
mapping elements of $V$ to elements of $V$ and such that $\det J
(\gamma) = 1$. Let $\rho_\gamma: V \mapsto V$ be the composition
transformation defined by $f\mapsto f\circ \gamma $ for any $f \in
V$. By the general chain rule for composition of transformations of
several variables,
\begin{eqnarray*}
J(\rho_\gamma (f)) = J(f\circ \gamma)=(J(f)\circ \gamma) J(\gamma)=\rho_\gamma (J(f))J(\gamma), \\
\det J(\rho_\gamma (f)) = \det (J(f)\circ \gamma) \det J(\gamma)=
\det \rho_\gamma (J(f))\det J(\gamma).
\end{eqnarray*}
Hence, for any transformation $\gamma$ with $\det J(\gamma)=1$, the
composition transformation $\rho_\gamma$ defines an endomorphism of
the ternary  Nambu-Lie algebra with ternary product
\eqref{JacternaryNambuLie}. Therefore, by Theorem
\ref{ThmConstrHomNambuLie}, for any such transformation $\gamma$,
the triple
$$\left(V,\lbrack \cdot,\cdot,\cdot \rbrack_\gamma=\rho_\gamma\circ \lbrack \cdot,\cdot,\cdot \rbrack,\widetilde{\rho_\gamma}\right)$$
is a ternary Hom-Nambu-Lie algebra.
\end{example}
\begin{remark}
One may expect that a ternary generalization of the binary Witt
algebra defined on the generators $Q_n$ by the bracket
$[Q_n,Q_m]=(m-n)Q_{n+m}$ could be given by a ternary bracket defined
on the generators $Q_n$ by
$$[ Q_k,Q_m,Q_n]= (m-k)(n-m)(n-k) Q_{k+m+n}
$$
It turns out that in this case the Nambu identity is not satisfied
\begin{multline*}FI(Q_u,Q_v,Q_k,Q_m,Q_n)= -2 (k-m) (k-n) (m-n) (-u+v)
\\ (-n u+u^2+m (n-u-v) +k (m+n-u-v)-nv+uv+v^2) Q_{k+m+n+u+v}
\end{multline*}
Moreover, it is not a ternary Hom-Nambu-Lie algebra for any
nontrivial linear maps of the form
\begin{equation} \alpha_1(Q_n)=
a_n Q_n \quad \alpha_2(Q_n)= b_n Q_n.
\end{equation}
\end{remark}

\section{Ternary $q$-Virasoro-Witt algebras} \label{HomNambuVW}
In this Section, we describe a family of homomorphisms of the ternary Nambu-Lie algebras
of Curtright, Fairlie
and Zachos and use this and the composition method to define
$q$-Virasoro-Witt algebras.
We also use this and Theorem \ref{ThmConstrHomNambuLie}
to construct Hom-Nambu-Lie algebras associated to
those two ternary Virasoro-Witt algebras defined by
\eqref{CFZbracket1}-\eqref{CFZbracket4} which are ternary Nambu-Lie
algebras ($z=\pm 2 i$).

Let us  provide first a class of algebra homomorphisms of ternary
Virasoro-Witt algebras. We consider the linear maps $f$ defined on
the generators by
\begin{eqnarray}
f (Q_n)= a_n Q_n,\quad f (R_n)= b_n  Q_n.
\end{eqnarray}
satisfying for any generators $X_n,Y_m,Z_n$ of the ternary
Virasoro-Witt algebra
\begin{equation}
f ([X_n,Y_m,Z_n])=[f (X_n),f (Y_m),f (Z_n)].
\end{equation}
The previous identity applied to the triples $(Q_k,Q_m,Q_n)$,
$(Q_k,Q_m,R_n)$, $(Q_k,R_m,R_n)$ and $(R_k,R_m,R_n)$, for any
$k,m,n\in \mathbb{Z}$ lead to the system of equations
 \begin{align}
b_{k+m+n}&=a_{k}a_m a_n\quad \text {for } k\neq m\neq n\neq k\\
a_{k+m+n}&=a_{k} a_m b_n\quad \text {for } k\neq m\neq n\\
b_{k+m+n}&=a_{k} a_m b_n\quad \text {for } k\neq m\neq n\neq 0\\
b_{k+m+n}&=a_{k} b_m b_n\quad \text {for }  m\neq n\
\end{align}
which has as solution $a_n=b_n=q^n$, with $q\in \K$.
\begin{proposition} \label{prop:endoternVirWit}
The linear map  defined on the ternary Virasoro-Witt algebra by
\begin{eqnarray}
f (Q_n)=  q^n Q_n,\quad f (R_n)=  q^n  Q_n,\quad \text{ for } q\in\K
\end{eqnarray}
is a ternary algebra endomorphism.
\end{proposition}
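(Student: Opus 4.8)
The plan is to verify directly that the proposed map $f$ with $f(Q_n)=q^nQ_n$ and $f(R_n)=q^nR_n$ (note: the statement writes $q^nQ_n$ for $f(R_n)$, but the defining system forces the target to be $R_{k+m+n}$-type terms, so I read this as $f(R_n)=q^nR_n$) satisfies $f([X,Y,Z])=[f(X),f(Y),f(Z)]$ on all triples of generators. Since the bracket is trilinear and skew-symmetric and $f$ is linear, it suffices to check the four cases corresponding to the defining relations \eqref{CFZbracket1}--\eqref{CFZbracket4}. The key observation making all four identities work simultaneously is the single exponential identity $q^k q^m q^n = q^{k+m+n}$, which is exactly why $a_n=b_n=q^n$ solves the system extracted above; the scalar coefficients $(k-m)(m-n)(k-n)$, $(k-m)$, $(n-m)$, and the extra term $zn$ in \eqref{CFZbracket2} are all left untouched by $f$ because $f$ only rescales basis elements and the grading is additive.

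Concretely, for \eqref{CFZbracket1} I would compute
\begin{equation*}
[f(Q_k),f(Q_m),f(Q_n)] = q^{k+m+n}[Q_k,Q_m,Q_n] = q^{k+m+n}(k-m)(m-n)(k-n)R_{k+m+n},
\end{equation*}
while $f([Q_k,Q_m,Q_n]) = f\big((k-m)(m-n)(k-n)R_{k+m+n}\big) = q^{k+m+n}(k-m)(m-n)(k-n)R_{k+m+n}$; these agree. The case \eqref{CFZbracket2} is the only one with a subtlety: one must check that the internal term $Q_{k+m+n}+znR_{k+m+n}$ transforms correctly, i.e. that $f(Q_{k+m+n}+znR_{k+m+n}) = q^{k+m+n}(Q_{k+m+n}+znR_{k+m+n})$, which holds because $f$ scales both $Q_{k+m+n}$ and $R_{k+m+n}$ by the same factor $q^{k+m+n}$. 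Cases \eqref{CFZbracket3} and \eqref{CFZbracket4} are immediate by the same bookkeeping, the latter being trivial since both sides vanish.

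The one point requiring a word of care is that the defining system above was derived only under genericity hypotheses on the indices (such as $k\neq m\neq n\neq k$), since for coincident indices the skew-symmetry forces the relevant bracket to vanish and no constraint arises. So I would remark that for degenerate index patterns the corresponding identity $f([X,Y,Z])=[f(X),f(Y),f(Z)]$ holds trivially because both sides are zero, and hence the verification on generic triples together with trilinearity and skew-symmetry of $[\cdot,\cdot,\cdot]$ suffices to conclude that $f$ is an algebra endomorphism of the whole ternary Virasoro-Witt algebra. I do not anticipate a genuine obstacle here; the content is entirely the homogeneity of $f$ with respect to the $\mathbb{Z}$-grading, and the only thing to get right is matching the $z$-dependent term in \eqref{CFZbracket2}.
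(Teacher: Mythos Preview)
Your proposal is correct and follows essentially the same approach as the paper. The paper precedes the proposition by positing the ansatz $f(Q_n)=a_nQ_n$, $f(R_n)=b_nR_n$, extracting from the four bracket types \eqref{CFZbracket1}--\eqref{CFZbracket4} the functional equations in $a_n,b_n$, and observing that $a_n=b_n=q^n$ solves them; your direct verification on each of the four brackets is exactly the same computation read in the other direction, and your remark on the typo ($f(R_n)=q^nR_n$, not $q^nQ_n$) and on degenerate index patterns is apt.
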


Theorem \ref{ThmConstrHomNambuLie} and the ternary
Virasoro-Witt algebra endomorphism described in
Proposition \ref{prop:endoternVirWit} lead to the following result.

\begin{theorem}\label{ThnHomNambuLieVW}
Let $W$ be the linear space generated by
 $\{ Q_n,R_n\}_{n\in\mathbb{Z}}$. Let
skewsymmetric trilinear bracket multiplication $[\cdot, \cdot, \cdot] W^3 \rightarrow W$ be defined on the generators of $W$ by
\begin{eqnarray}\label{qVW1}
[ Q_k,Q_m,Q_n]_q&=& q^{k+m+n}(k-m)(m-n)(k-n) R_{k+m+n}\\
\label{qVW2} [Q_k,Q_m,R_n]_q&=& q^{k+m+n}(k-m)(Q_{k+m+n}+ z n R_{k+m+n})\\
\label{qVW3} [Q_k,R_m,R_n]_q&=& q^{k+m+n}(n-m) R_{k+m+n}\\
\label{qVW4} [R_k,R_m,R_n]_q&=& 0
\end{eqnarray}
for $q\in\K$.  Let $\alpha_1$ and $\alpha_2$ be the linear maps on $W$  given by
\begin{equation}
\alpha_1(Q_n)= q^n Q_n \quad \alpha_1(R_n)= q^n R_n,
\end{equation}
\begin{equation}
\alpha_2(Q_n)= q^n Q_n \quad \alpha_2(R_n)= q^n R_n.
\end{equation}
Then $(W,[~,~,~]_q, (\alpha_1,\alpha_2))$ are ternary Hom-Nambu-Lie
algebras  if $z=\pm 2 i$.
\end{theorem}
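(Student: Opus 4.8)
The plan is to recognize $(W,[\cdot,\cdot,\cdot]_q,(\alpha_1,\alpha_2))$ as an instance of the composition construction of Theorem \ref{ThmConstrHomNambuLie}, applied to the ternary Nambu-Lie algebra $(W,[\cdot,\cdot,\cdot])$ of \eqref{CFZbracket1}--\eqref{CFZbracket4} (which is Nambu-Lie precisely when $z=\pm 2i$) together with the ternary algebra endomorphism $\rho$ determined by $\rho(Q_n)=q^n Q_n$, $\rho(R_n)=q^n R_n$ of Proposition \ref{prop:endoternVirWit}. First I would observe that $\rho$ acts on each basis element of weight $w$ (that is, on $Q_w$ or $R_w$) by the scalar $q^w$, and that every term on the right-hand side of \eqref{CFZbracket1}--\eqref{CFZbracket4} has weight $k+m+n$; hence $\rho\circ[\cdot,\cdot,\cdot]$ simply multiplies each of those right-hand sides by $q^{k+m+n}$, which is exactly what \eqref{qVW1}--\eqref{qVW4} prescribe. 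Thus $[\cdot,\cdot,\cdot]_q=[\cdot,\cdot,\cdot]_\rho:=\rho\circ[\cdot,\cdot,\cdot]$, while $(\alpha_1,\alpha_2)=(\rho,\rho)=\widetilde{\rho}$, and skew-symmetry of $[\cdot,\cdot,\cdot]_q$ is immediate from that of $[\cdot,\cdot,\cdot]$ and the linearity of $\rho$. With these identifications, the conclusion of Theorem \ref{ThmConstrHomNambuLie} is literally the assertion of the present theorem.

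Consequently there are only two facts to supply: (i) $\rho$ is a ternary algebra endomorphism of $W$, which is exactly Proposition \ref{prop:endoternVirWit}; and (ii) $(W,[\cdot,\cdot,\cdot])$ is a ternary Nambu-Lie algebra when $z=\pm 2i$ (the Remark following the definition of ternary Virasoro-Witt algebras). For (ii), skew-symmetry holds by construction, so only the fundamental (Filippov) identity $FI\equiv 0$ remains; by multilinearity it suffices to evaluate $FI(x_1,x_2,x_3,x_4,x_5)$ on basis vectors $x_i\in\{Q_n,R_n\}$. Using skew-symmetry of the bracket in its three slots, the $2^5$ type-patterns reduce to the finitely many cases in which $(x_1,x_2)$ has type $QQ$, $QR$ or $RR$ and $\{x_3,x_4,x_5\}$ is a multiset of type $QQQ$, $QQR$, $QRR$ or $RRR$; the case $\{x_3,x_4,x_5\}=\{RRR\}$ is trivially zero. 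In each remaining case, substituting \eqref{CFZbracket1}--\eqref{CFZbracket4} and collecting the four terms of $FI$ gives a polynomial in the indices times a fixed polynomial in $z$; that $z$-polynomial turns out to be a multiple of $z^2+4$, so that $FI$ vanishes identically exactly for $z^2=-4$, i.e. $z=\pm 2i$. (Alternatively, (ii) may simply be quoted from Curtright--Fairlie--Zachos \cite{CurtrFairlZachos}.)

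Granting (i) and (ii), Theorem \ref{ThmConstrHomNambuLie} applied with $V=W$, the Nambu-Lie bracket $[\cdot,\cdot,\cdot]$, and the endomorphism $\rho$ shows that $(W,[\cdot,\cdot,\cdot]_\rho,\widetilde{\rho})$ is a ternary Hom-Nambu-Lie algebra for every $q\in\K$; by the first paragraph this is $(W,[\cdot,\cdot,\cdot]_q,(\alpha_1,\alpha_2))$, which finishes the proof. The main obstacle is the case analysis in (ii): one must check that the several non-trivial specializations of $FI$, which a priori could impose incompatible constraints on $z$, all carry the single common factor $z^2+4$. I expect the decisive cases to be those in which the $z$-dependent $R$-part of \eqref{CFZbracket2} enters quadratically -- a bracket of type $[Q,Q,R]$ nested inside another bracket that again involves a $Q$-pair -- since it is precisely the interaction of the $Q_{k+m+n}$-part and the $z\,n\,R_{k+m+n}$-part of \eqref{CFZbracket2} that produces the $z^2+4$ factor; the purely combinatorial ($z$-independent) parts of $FI$ should reduce to Jacobi-type polynomial identities that vanish for all $z$.
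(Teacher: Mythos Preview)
Your proposal is correct and follows exactly the paper's approach: the theorem is stated in the paper as an immediate consequence of Theorem~\ref{ThmConstrHomNambuLie} applied with the endomorphism of Proposition~\ref{prop:endoternVirWit}, relying on the Curtright--Fairlie--Zachos fact that $(W,[\cdot,\cdot,\cdot])$ is Nambu-Lie precisely when $z=\pm 2i$. Your additional outline for independently verifying~(ii) via a case analysis on basis types goes beyond what the paper provides (it simply quotes~\cite{CurtrFairlZachos}), but is sound as a sketch; note only that many of the type-patterns yield $FI=0$ identically in $z$, so the accurate statement is that each $FI$ is a multiple of $z^2+4$ (possibly the zero multiple), not that each is a nonzero multiple.
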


We call $(W,[~,~,~]_q)$ for any values of parameters $z$ a ternary $q$-Virasoro-Witt algebras.

\begin{remark}
In Theorem \ref{ThnHomNambuLieVW}, for $q=1$ we recover the ternary
Nambu-Lie algebras obtained by Curtright, Fairlie and Zachos. We
assume in the sequel that $q\neq 0$, since otherwise we have a zero
ternary algebra.
\end{remark}

\begin{remark}
The ternary Hom-Nambu-Lie algebras are actually not ternary
Nambu-Lie algebras for $q\neq 1$.
Indeed,
\begin{align*}
& FI(Q_u,Q_v,Q_k,Q_m,Q_n)=\\
& q^{k + m + n + u + v} (u - v) (q^{k + m + n}(k - m) (k - n) (m - n)+  \\
& q^{m + u + v} (k - n)(m - u) (m - v)+ \\
& q^{k + u + v} (n-m )(u-k ) (v-k )+ \\
& q^{n + u + v} (m-k )(u-n ) (v-n )) Q_{k + m + n + u + v}+ \\
& q^{k + m + n + u + v} (u - v) ( (t^{k + m + n}(k - m) (k - n) (m - n) (k + m + n)   + \\
& q^{k + u + v}(n-m )  (u-k  ) (v-k ) (k + u + v) + \\
& q^{m + u + v}(k - n)  (m - u) (m - v) (m + u + v) + \\
& q^{n + u + v}(m-k )  (u-n ) (v-n) (n + u + v)) z) R_{k + m + n + u + v}
\end{align*}
\end{remark}

\section{On the Hom-Nambu-Lie structure of Ternary $q$-Virasoro-Witt algebras} \label{sec:HomNambuLieTernqVirWitt}
For $z\neq \pm 2i$ and any $q\in \mathbb{C}$, the ternary
$q$-Virasoro-Witt algebras defined by \eqref{qVW1}-\eqref{qVW4} are Hom-Nambu-Lie since they are obtained by composition method
(Theorem \ref{ThmConstrHomNambuLie}) from the ternary Virasoro-Witt algebras of Curtright, Fairlie and Zachos which are Nambu-Lie for $z=\pm 2i$. For values $z\neq \pm 2i$ this does not work since then the Virasoro-Witt algebras are not Nambu-Lie. Nevertheless, this does not exclude that these ternary algebras are Hom-Nambu-Lie in some other way.

We show in this Section that the ternary $q$-Virasoro-Witt algebras
defined by the brackets (\ref{qVW1}-\ref{qVW4}) carry,   for any
values of $z$ and $q$, a structure of ternary Hom-Nambu-Lie algebra.
We provide all the homomorphisms of a given specific natural type yielding this structure. More precisely, we consider  twisting maps $\alpha_1$ and $\alpha_2$ defined on the generators of $W$ by
\begin{eqnarray}
\alpha_1 (Q_n)&=  a^1_n  Q_n+b^1_n  R_n, \quad  \alpha_1 (R_n)&=
c^1_n Q_n+d^1_n  R_n\\
\alpha_2 (Q_n)&=  a^2_n  Q_n+b^2_n  R_n, \quad  \alpha_2 (R_n)&=
c^2_n Q_n+d^2_n  R_n
\end{eqnarray}
We assume further that the supports of the function $a^i_n, b^i_n,
c^i_n, d^i_n$, $i=1,2$ are either $0$ or all $\mathbb{Z}$. Then we
get, for any values of $z$ and $q$, the following result.
\begin{theorem}\label{thmHNL2}Let $W$ be the linear space
generated by
 $\{ Q_n,R_n\}_{n\in\mathbb{Z}}$.
Let  $\alpha_1$ and $\alpha_2$ be two linear maps on $W$ defined on
the generators by
\begin{eqnarray}
\alpha_1 (Q_n)&=  a^1_n  Q_n+b^1_n  R_n, \quad  \alpha_1 (R_n)&=
c^1_n Q_n+d^1_n  R_n\\
\alpha_2 (Q_n)&=  a^2_n  Q_n+b^2_n  R_n, \quad  \alpha_2 (R_n)&=
c^2_n Q_n+d^2_n  R_n.
\end{eqnarray}
Assume further that the supports of the function $a^i_n, b^i_n,
c^i_n, d^i_n$, $i=1,2$ are either $0$ or all $\mathbb{Z}$.

Then the ternary $q$-Virasoro-Witt algebras on $W$
defined by the brackets \eqref{qVW1}-\eqref{qVW4} are ternary Hom-Nambu-Lie algebras  with twisting maps  $\alpha_1$ and
$\alpha_2$, for any value of $z$ and $q$,  if and only if
\begin{eqnarray}
\alpha_1 (Q_n)= \beta_1 q^n  R_n,\quad \alpha_1 (R_n)= 0,\\
\alpha_2 (Q_n)= \beta_2 q^n  R_n,\quad \alpha_2(R_n)= 0.
\end{eqnarray}
where $\beta_1,\beta_2\in \K.$
\end{theorem}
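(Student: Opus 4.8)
The plan is to verify the Hom-Nambu identity directly by substituting the proposed twisting maps and to obtain the converse by solving the resulting system of constraints on the coefficient functions. First I would set up notation: write $\alpha_i$ in matrix form on each graded piece $W_n=\K Q_n\oplus\K R_n$, and recall that since the bracket $[\cdot,\cdot,\cdot]_q$ is graded (the bracket of generators of degrees $k,m,n$ lands in degree $k+m+n$) and skew-symmetric, the Hom-Nambu identity $HFI=0$ decomposes into a family of scalar equations indexed by $(u,v,k,m,n)\in\mathbb Z^5$, one for each generator type. Because $[R_k,R_m,R_n]_q=0$ and the only way to produce a $Q$ on the right-hand side is through the bracket $[Q,Q,R]_q$, the structure is quite rigid: many of the five-fold brackets vanish automatically, which is what makes a complete classification feasible.

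The sufficiency direction is the routine check: with $\alpha_1(Q_n)=\beta_1 q^n R_n$, $\alpha_1(R_n)=0$ and similarly for $\alpha_2$, every term of $HFI(x_1,\dots,x_5)$ in which an outer slot receives an $\alpha_i$-image is either zero (if that image is $0$, e.g. when $x_j$ is an $R$) or becomes an $R$-entry; since $[\cdot,R,R]_q$ kills two $R$'s unless the third is a $Q$, and $[R,R,R]_q=0$, one checks term by term that each summand of $HFI$ vanishes. The nonzero contributions only survive when the ``inner'' triple $\{x_1,x_2,x_3\}$, $\{x_1,x_2,x_4\}$ or $\{x_1,x_2,x_5\}$ is of the form $(Q,Q,Q)$ or $(Q,Q,R)$, producing an $R$ or a $Q$, and then the outer $\alpha$-images being proportional to $R_n$ force the final bracket to be a $[\cdot,R,R]_q$-type term which is again $R$-valued, so one has to see that the four surviving terms cancel in pairs. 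I expect this to reduce, after using $q^{\,\sum}$ factoring, to the elementary identity that the coefficient combination $(k-m)(m-n)(k-n)$ together with the antisymmetrizations appearing in \eqref{qVW1}--\eqref{qVW3} cancels — effectively the same cancellation that makes the $z=\pm2i$ Nambu-Lie case work, now made automatic because the $Q$-valued output channel has been switched off.

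For the necessity direction I would argue as follows. Plugging the general $\alpha_1,\alpha_2$ into $HFI$ and collecting coefficients of each basis vector $Q_{s}$ and $R_{s}$ (with $s=k+m+n+u+v$), one gets polynomial identities in the coefficient functions that must hold for all $u,v,k,m,n$. The support hypothesis (each of $a^i,b^i,c^i,d^i$ is either identically $0$ or everywhere nonzero) is what lets one pass from ``this polynomial in shifted indices vanishes for all indices'' to ``this coefficient function is of the form $\text{const}\cdot q^n$ or is $0$'': one isolates, for instance, the $Q_s$-coefficient coming from the $[Q,Q,R]_q$ output fed through $\alpha_i$, compares the index dependence on the two sides, and deduces first that $d^i_n\equiv 0$ and $a^i_n\equiv 0$ (so no $Q\mapsto Q$ and no $R\mapsto R$ parts survive), then that $c^i_n\equiv 0$ (no $R\mapsto Q$), leaving only $\alpha_i(Q_n)=b^i_n R_n$; a final comparison of the leading $R_s$-coefficients forces $b^i_n=\beta_i q^n$ for a constant $\beta_i$. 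The main obstacle is precisely this bookkeeping: the general $HFI$ is a sum of many terms, each a product of a structure-constant polynomial (degree $\le 3$ in the indices, possibly carrying a $z$) with a coefficient function evaluated at a shifted argument, and one must organize the cancellations by generator type and by ``which inner triple produced which output'' so that the four-dimensional family of constraints can actually be solved rather than merely written down. I would handle this by first discarding all terms that are manifestly zero (any term with two $R$-slots among the outer arguments after the twist, once we know the twist is $R$-valued — but in the necessity direction we cannot assume that yet, so instead I would split on the generator types of $x_1,\dots,x_5$ into the handful of essentially distinct cases), and in each surviving case read off the constraint; requiring validity \emph{for all} $z$ simultaneously then kills the $z$-proportional and $z$-free parts separately, which is the extra leverage that pins the answer down to the stated form.
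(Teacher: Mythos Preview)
Your overall plan --- evaluate $HFI$ on tuples of generators and solve the resulting system for the coefficient functions --- is exactly what the paper does. But the necessity argument you sketch does not match the actual structure of the computation, and as written it has a gap.

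You assert that one ``deduces first that $d^i_n\equiv 0$ and $a^i_n\equiv 0$'', then $c^i_n\equiv 0$, then $b^i_n=\beta_i q^n$, in that linear order. The paper's proof shows this is not how the constraints unfold. Starting from $HFI(Q_u,Q_v,Q_k,Q_m,Q_n)=0$ one obtains a system whose first consequence is only the dichotomy $a^1_n=0$ \emph{or} $a^2_n=\lambda_2 q^n$; one cannot conclude $a^i_n=0$ outright. The argument then proceeds through a case tree (Case~1: $a^1_n=0$, splitting further on $a^2_n$, $b^1_n$, $b^2_n$; Case~2: $a^1_n\neq 0$), in which several branches terminate because one of $\alpha_1,\alpha_2$ is forced to be identically zero, and the branch $a^1_n\neq 0,\ a^2_n\neq 0$ survives all the way to the final obstruction $\lambda_1\lambda_2(z^2+4)=0$, which is where ``for all $z$'' is actually used. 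So the deduction $a^i_n=0$ is not a first step but the conclusion of the whole tree; your linear ordering glosses over precisely the nontrivial part. Your remark that one may separate the $z$-free and $z$-linear parts is a legitimate extra tool, and could in principle shorten the tree, but you have not carried it out, and the paper does not rely on it except at that single terminal branch.

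On the sufficiency side your outcome is right but the explanation is misleading. With $\alpha_i(Q_n)=\beta_i q^n R_n$, $\alpha_i(R_n)=0$, every term of $HFI$ either vanishes (whenever an $R$ sits in an $\alpha$-slot) or has two outer $R$-entries, so the only surviving outer brackets are of type $[Q_s,R_\bullet,R_\bullet]_q$; one then checks equality term by term, e.g.\ for $(Q_u,Q_v,Q_k,Q_m,R_n)$ the LHS and the single surviving RHS term both equal $\beta_1\beta_2 q^{2(u+v+k+m+n)}(k-m)(v-u)R_{u+v+k+m+n}$. This is \emph{not} the $(k-m)(m-n)(k-n)$ cancellation underlying the $z=\pm 2i$ Nambu--Lie case; it is a much simpler matching, entirely independent of $z$, because the $Q$-output channel and the $z$-dependent $R$-output are both killed by the outer $[R,R,\cdot]$ structure.
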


\begin{proof}
The Hom-Nambu identities are equivalent to a large system of
equations for the coefficients $a^i_n, b^i_n, c^i_n, d^i_n$,
$i=1,2$. Solving this system amounts to going through a large tree
of cases. The identity $HFI
(Q_u,Q_v,Q_k,Q_m,Q_n)=0$ leads for any $u,v,k,m,n$ to the following
system of equations
\begin{equation}\label{Sec4eq1}
\begin{array}{l}
a^1_k \left(q^n a^2_m-q^m a^2_n\right)=0,\\
   \left(q^m a^1_k-q^k  a^1_m\right) a^2_n =0,\\
  q^{u+v}  a^1_k   a^2_m -q^{k+m}  a^1_u   a^2_v =0,\\
   \left(q^n  a^2_m -q^m  a^2_n \right)  b^1_k =0,\\
    a^2_n  \left(q^m  b^1_k -q^k  b^1_m \right)=0,\\
   q^{u+v}  a^2_n   b^1_k -q^{k+n}  a^2_v   b^1_u =0,\\
   q^k  a^2_n   b^1_m -q^n  a^1_k   b^2_m =0,\\
   q^n  a^2_m   b^1_k -q^k  a^1_m   b^2_n =0,\\
    \left(q^m  a^1_k -q^k  a^1_m \right)  b^2_n =0,\\
      a^1_k  \left(q^n  b^2_m-q^m b^2_n\right)=0,\\
     a^2_m b^1_k-a^1_k b^2_m =0, \\
     q^{k+n} a^2_v b^1_u-q^{u+v} a^1_k b^2_n =0,\\
     \left(-q^{u+v} a^1_m b^2_n+q^{m+n} a^1_u b^2_v\right)=0.
\end{array}
\end{equation}

The  first equation of \eqref{Sec4eq1} yields $ a^1_n=0$ or
$q^n a^2_m-q^m a^2_n=0$ which implies, since this is
true for any $n$ and $m$,  that $ a^2_n$ is of the form $a^2_n=\lambda_2 q^n$, where $\lambda_2\in \K$.

\noindent \textbf{Case 1.} $ a^1_n=0.$  The system \eqref{Sec4eq1} reduces to
\begin{equation}\label{Sec4case1}
\begin{array}{l}
a^2 _m b^1 _k = 0,\\ \left (q^n a^2 _m - t^m a^2 _n \right) b^1 _k
=0,\\ a^2 _n\left (q^m b^1 _k - q^k b^1 _m \right) = 0.
\end{array}
\end{equation}
Therefore, it follows two subcases to study $a^2 _n=0$ or $a^2 _n\neq 0$ and $ b^1
_n=0$.

\noindent \textbf{Case 1.1. $ a^1_n=0, \ a^2_n=0$}. The identity $HFI
(Q_u,Q_v,Q_k,Q_m,Q_n)=0$ is fulfilled in this case but the identity
$HFI (Q_u,R_v,Q_k,Q_m,Q_n)=0$ for any $u,v,k,m,n$ is equivalent to
the following system of equations
\begin{equation}\label{Sec4case1.1}
\begin{array}{l}
 (q^m b^1_k - q^k b^1_m) b^2_n=0\\
 b^1_k (q^n b^2_m - q^m b^2_n)=0\\
    (q^n b^1_k b^2_m - q^k b^1_m b^2_n)=0\\
   b^1_u c^2_v=0
\end{array}
\end{equation}Then, we consider the following subcases

\noindent \textbf{Case 1.1.1. $ a^1_n=0, \ a^2_n=0,\ b^1_n=0$}.

The identity
$HFI (Q_u,Q_v,R_k,Q_m,Q_n)=0$  is equivalent to
the following system of equations
\begin{equation}\label{Sec4case1.1.1}
\begin{array}{l}
 c^1_k (q^n b^2_m - q^m b^2_n)=0\\
   b^2_n c^1_m=0
\end{array}
\end{equation}
In case $b^2_n=0$,  the identity
$HFI (Q_u,R_v,R_k,R_m,Q_n)=0$  is equivalent to the equations
$$c^1_n c^2_m=0,\ c^2_n d^1_m=0,\ c^1_n d^2_m=0,\ d^1_n d^2_m=0.$$
This case leads to $\alpha_1$ or $\alpha_2$ identically trivial.

In case $b^2_n\neq 0$ we have  $c^1_n= 0$. The remaining identities lead to $\alpha_1$  identically trivial.

\noindent \textbf{Case 1.1.2. $ a^1_n=0, \ a^2_n=0,\ b^1_n\neq 0$}. According to the second equation of the system \eqref{Sec4case1.1} we should consider $b^2_n=\beta_2
q^n$, where $\beta_2$ is any scalar in $\K$.
The identity
$HFI (Q_u,R_v,Q_k,Q_m,Q_n)=0$  translates to the
equations
$
q^n b^1_m - q^m b^1_n=0, \
   b^1_n c^2_m=0.
$
Thus, $b^1_n=\beta_1
q^n$, where $\beta_1$ is any scalar in $\K$ and $c^2_n=0.$

The remaining identities imply that $c^1_n=0,\ d^1_n=0,\ d^2_n=0.$ Therefore, we get only
one solution, that is
\begin{eqnarray}
\alpha_1 (Q_n)= \beta_1 q^n  R_n,\quad \alpha_1 (R_n)= 0,\\
\alpha_2 (Q_n)= \beta_2 q^n  R_n,\quad \alpha_2(R_n)= 0.
\end{eqnarray}

\noindent \textbf{Case 1.2. $ a^1_n=0, \ a^2_n\neq 0$}.
 According to  the system \eqref{Sec4case1} we have $b^1_n=0.$

The identity
$HFI (Q_u,Q_v,R_k,Q_m,Q_n)=0$  implies
$$
   a^2_n c^1_m=0,\ b^2_n c^1_m=0,\ a^2_n d^1_m=0.
$$ Since $a^2_n\neq 0$ then $
   c^1_m=0$ and   $d^1_m=0.
$ Thus $\alpha_1$ is identically trivial.

\noindent \textbf{Case 2. $ a^1_n\neq0$}. According to \eqref{Sec4eq1} we have $a^2_n=\lambda_2
q^n$ and $b^2_n=\beta_2
q^n.$

The identity
$HFI (Q_u,R_v,Q_k,Q_m,Q_n)=0$  implies $ a^1_n c^2_m=0$. Since  $ a^1_n\neq0$ then  $ c^2_m=0$.

We consider in the following two subcases $\lambda_2=0$ and $\lambda_2\neq0$.

\noindent \textbf{Case 2.1. $ a^1_n\neq0, a^2_n=0$}.
The
identities $HFI (Q_u,Q_v,Q_k,Q_m,Q_n)=0$ and $HFI
(Q_u,R_v,Q_k,Q_m,Q_n)=0$ implies $b^2_n=0, \ d^2_n=0$.
Therefore $\alpha_2\equiv 0$.

\noindent \textbf{Case 2.2. $ a^1_n\neq0, a^2_n\neq0$}.
The
identity $HFI (Q_u,Q_v,R_k,Q_m,Q_n)=0$ leads to  $ c^1_n=0$ and $a^1_n=\lambda_1
q^n$ with $\lambda_1\neq 0$.
The
identity $HFI (Q_u,Q_v,Q_k,Q_m,Q_n)=0$ leads to  $b^1_n=\beta_1
q^n$ and $\beta_2=\frac{\lambda_2
\beta_1}{\lambda_1}.$
Using $HFI (Q_u,R_v,Q_k,Q_m,Q_n)=0$ and $HFI (Q_u,Q_v,R_k,Q_m,Q_n)=0$ we obtain $d^2_n=\lambda_2 t^n$,
 $d^1_n=\lambda_1 t^n$ and $\beta_1 =0$. Therefore $\beta_2 =0$.
In this case, the Hom-Nambu-Lie identities reduce to
 $ \lambda_1 \lambda_2(z^2+4)=0,$
which is impossible for any $z\neq \pm 2i$, since $\lambda_1
\lambda_2 \neq 0$.
In fact, this case corresponds to Theorem \ref{ThnHomNambuLieVW} where $z^2+4=0$ and
\begin{eqnarray}
\alpha_1 (Q_n)= \lambda_1 q^n  Q_n,\quad \alpha_1 (R_n)= \lambda_1 q^n  R_n\\
\alpha_2 (Q_n)= \lambda_2 q^n  Q_n,\quad \alpha_2 (R_n)= \lambda_2 q^n
R_n.
\end{eqnarray}
\end{proof}
In the particular case $q=1$, we obtain the following corollary.
\begin{proposition}
Let  $\alpha_1$ and $\alpha_2$ be two linear maps defined  on the
generators of ternary  Virasoro-Witt algebras by
\begin{eqnarray}
\alpha_1 (Q_n)&=  a^1_n  Q_n+b^1_n  R_n, \quad  \alpha_1 (R_n)&=
c^1_n Q_n+d^1_n  R_n\\
\alpha_2 (Q_n)&=  a^2_n  Q_n+b^2_n  R_n, \quad  \alpha_2 (R_n)&=
c^2_n Q_n+d^2_n  R_n.
\end{eqnarray}
Assume further that supports of $a^i_n, b^i_n,
c^i_n, d^i_n$, $i=1,2$ as functions of $n$ are either $0$ or all $\mathbb{Z}$.

 Then the ternary Virasoro-Witt algebras on $W$ defined by
the brackets \eqref{CFZbracket1}-\eqref{CFZbracket4}
   are ternary Hom-Nambu-Lie algebras  with twisting maps  $\alpha_1$ and $\alpha_2$, for any
value of $z$,  if and only if
\begin{eqnarray}
\alpha_1 (Q_n)= \beta_1   R_n,\quad \alpha_1 (R_n)= 0,\\
\alpha_2 (Q_n)= \beta_2   R_n,\quad \alpha_2(R_n)= 0.
\end{eqnarray}
\end{proposition}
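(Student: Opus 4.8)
The plan is to deduce this statement as the specialization $q=1$ of Theorem \ref{thmHNL2}. First I would note that setting $q=1$ in the ternary $q$-Virasoro-Witt brackets \eqref{qVW1}--\eqref{qVW4} produces exactly the Curtright--Fairlie--Zachos brackets \eqref{CFZbracket1}--\eqref{CFZbracket4}, since every prefactor $q^{k+m+n}$ becomes $1$. The hypotheses on the twisting maps $\alpha_1,\alpha_2$ --- namely that they have the stated form on the generators and that each of the coefficient functions $a^i_n,b^i_n,c^i_n,d^i_n$, $i=1,2$, has support either $0$ or all of $\mathbb{Z}$ --- are literally the same as in Theorem \ref{thmHNL2}.

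Next I would check that the proof of Theorem \ref{thmHNL2} never uses $q\neq 1$; it only uses $q\neq 0$, which of course holds for $q=1$. Indeed, the system of equations obtained from the Hom-Nambu identities $HFI(\cdots)=0$ and the whole case tree (Cases 1, 1.1, 1.1.1, 1.1.2, 1.2, 2, 2.1, 2.2) are valid for every nonzero $q$, and $q=1$ changes nothing in any of these case distinctions. Hence Theorem \ref{thmHNL2} applies with $q=1$ verbatim, and its conclusion reads $\alpha_1(Q_n)=\beta_1 q^n R_n=\beta_1 R_n$, $\alpha_1(R_n)=0$, $\alpha_2(Q_n)=\beta_2 q^n R_n=\beta_2 R_n$, $\alpha_2(R_n)=0$, which is exactly the asserted characterization.

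The only point that deserves a remark is the phrase ``for any value of $z$''. In the proof of Theorem \ref{thmHNL2}, Case 2.2 produces, when $z^2+4=0$, the extra diagonal solutions $\alpha_i(Q_n)=\lambda_i q^n Q_n$, $\alpha_i(R_n)=\lambda_i q^n R_n$ --- those of Theorem \ref{ThnHomNambuLieVW} --- but these are ruled out precisely because we insist that the same pair $\alpha_1,\alpha_2$ work simultaneously for \emph{all} $z$, and the reduction $\lambda_1\lambda_2(z^2+4)=0$ fails as soon as $z\neq\pm 2i$. Thus no genuine obstacle arises: the proof is a direct citation of Theorem \ref{thmHNL2} with $q$ set equal to $1$, together with the trivial identification of the brackets \eqref{qVW1}--\eqref{qVW4} at $q=1$ with \eqref{CFZbracket1}--\eqref{CFZbracket4}.
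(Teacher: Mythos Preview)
Your proposal is correct and matches the paper's own treatment exactly: the paper introduces this proposition with the sentence ``In the particular case $q=1$, we obtain the following corollary'' and gives no separate proof, so specializing Theorem~\ref{thmHNL2} to $q=1$ is precisely the intended argument. Your additional remarks verifying that nothing in the case analysis of Theorem~\ref{thmHNL2} requires $q\neq 1$, and your handling of the ``for any value of $z$'' clause, are accurate and make explicit what the paper leaves implicit.
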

\begin{remark}
We have considered previously only linear maps with a global support
but of course it could be possible to get other solutions if one
considers different supports for the maps.
\end{remark}

\section{Ternary Hom-Nambu-Lie algebras induced by Ternary Nambu-Lie algebras} \label{sec:TernHomNambuLieTernNambuLie}
In this Section, we address for ternary algebras
the following question discussed for binary
Hom-associative algebras by Y. Fr\'{e}gier and A. Gohr in
\cite{FregierGohr,Gohr}
(see also \cite{FregierGohrSilvestrov}): When a ternary
Hom-Nambu-Lie algebra $(V,[\cdot,\cdot,\cdot],(\alpha,\alpha))$ is induced by a
ternary Nambu-Lie algebra $(V,[\cdot,\cdot,\cdot]')$ by the composition method according to Theorem
\ref{ThmConstrHomNambuLie}? That is, when does exist a ternary algebra
endomorphism $\rho:V\rightarrow V$ with respect to $[\cdot,\cdot,\cdot]'$ such
that $[\cdot,\cdot,\cdot]=\rho\circ[\cdot,\cdot,\cdot]'$ and $\alpha=\rho$?

Let $(V,[\cdot,\cdot,\cdot],(\alpha,\alpha))$ be a ternary Hom-Nambu-Lie algebra
and
 assume that  $\alpha$ is a ternary
algebra endomorphism. The Hom-Nambu identity for any
$x_1,\cdots,x_5\in V$ is
\begin{gather}
HFI(x_{1},x_{2},x_{3},x_{4},x_{5})= \nonumber \\
[ \alpha (x_{1}),\alpha(x_{ 2}),[x_{3},x_{4},x_{5}]] - [ [x_1,x_2,x_3 ],\alpha
(x_{4}),\alpha(x_{5})]  \nonumber \\  - [ \alpha (x_3),[ x_1,x_2,x_{4}] ,
\alpha(x_{5})]- [\alpha (x_3),\alpha(x_{4}),[x_1,x_2,x_{5}]]=0.
\nonumber
\end{gather}
If $\alpha$ is invertible then it is equivalent to
\begin{equation}\label{HNidAlphaInvert}
\begin{array}{l}
[ x_{1},x_{ 2}, [\alpha ^{-1}(x_{3}),\alpha ^{-1}(x_{4}),\alpha^{-1}(x_{5})]] \\
- [ [\alpha^{-1}(x_1),\alpha ^{-1}(x_2),\alpha ^{-1}(x_3) ], x_{4},x_{5}]\\
- [ x_3,[ \alpha ^{-1}(x_1),\alpha ^{-1}(x_2),\alpha^{-1}(x_{4})] , x_{5}] \\
- [x_3,x_{4},[\alpha ^{-1}(x_1),\alpha^{-1}(x_2),\alpha ^{-1}(x_{5})]]=0.
\end{array}
\end{equation}
If $\alpha$ is a ternary algebra endomorphism, then $\alpha^{-1}$
is also a ternary algebra endomorphism. Indeed, for any
$x_1,x_2,x_3\in V$,
\begin{eqnarray*}\alpha
^{-1}([x_1,x_2,x_3])&=\alpha ^{-1}([\alpha\circ\alpha ^{-1}(x_1)
,\alpha\circ\alpha ^{-1}(x_2),\alpha\circ\alpha ^{-1}(x_3)])\\
\ &= \alpha ^{-1}\circ\alpha([\alpha ^{-1}(x_1) ,\alpha
^{-1}(x_2),\alpha ^{-1}(x_3)])\\
\ &= [\alpha ^{-1}(x_1) ,\alpha ^{-1}(x_2),\alpha ^{-1}(x_3)]
\end{eqnarray*}
Therefore, the identity \eqref{HNidAlphaInvert} can be written as
\begin{eqnarray}\label{HNidAlphaInvert2}
[ x_{1},x_{ 2}, \alpha ^{-1}([x_{3},x_{4},x_{5}])]- [ \alpha
^{-1}([x_1,x_2,x_3 ]), x_{4},x_{5}]\\
 - [ x_3,\alpha ^{-1}([ x_1,x_2,x_{4}]) , x_{5}]-
 [x_3,x_{4},\alpha ^{-1}([x_1,x_2,x_{5}])]=0. \nonumber
\end{eqnarray}
Assume now that the bracket $[\cdot,\cdot,\cdot]$ is induced by a ternary
Nambu-Lie algebra  $(V,[\cdot,\cdot,\cdot]')$ and a ternary algebra endomorphism
$\rho$, that is $[\cdot,\cdot,\cdot]=\rho\circ[\cdot,\cdot,\cdot]'$. It follows that the
identity \eqref{HNidAlphaInvert2} is
\begin{eqnarray}\label{HNidAlphaInvert3}
\rho([ x_{1},x_{ 2}, \alpha ^{-1}\circ\rho([x_{3},x_{4},x_{5}]')]'-
[ \alpha
^{-1}\circ\rho([x_1,x_2,x_3 ]'), x_{4},x_{5}]'\\
 - [ x_3,\alpha ^{-1}\circ\rho([ x_1,x_2,x_{4}]') , x_{5}]'-
 [x_3,x_{4},\alpha ^{-1}\circ\rho([x_1,x_2,x_{5}]')]')=0. \nonumber
\end{eqnarray}
The previous identity correspond to a Nambu identity if $\alpha
^{-1}\circ\rho$ is the identity map. Hence, we get the following
proposition.
\begin{proposition}
Let $(V,[\cdot,\cdot,\cdot],(\alpha,\alpha))$ be a ternary Hom-Nambu-Lie algebra
and assume that  $\alpha$ is a ternary
algebra automorphism.
Then the ternary Hom-Nambu-Lie algebra $(V,[\cdot,\cdot,\cdot],(\alpha,\alpha))$
is induced by a ternary Nambu-Lie algebra $(V,[\cdot,\cdot,\cdot]')$ where
$[\cdot,\cdot,\cdot]'=\alpha^{-1}[\cdot,\cdot,\cdot]$.
\end{proposition}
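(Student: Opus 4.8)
The plan is to take the candidate ternary Nambu-Lie algebra to be $(V,[\cdot,\cdot,\cdot]')$ with $[\cdot,\cdot,\cdot]':=\alpha^{-1}\circ[\cdot,\cdot,\cdot]$, and then to recognize $(V,[\cdot,\cdot,\cdot],(\alpha,\alpha))$ as the Hom-Nambu-Lie algebra induced from it by Theorem \ref{ThmConstrHomNambuLie} with $\rho=\alpha$. For this one must verify: (i) $[\cdot,\cdot,\cdot]'$ is skew-symmetric; (ii) $\alpha$ is a ternary algebra endomorphism of $(V,[\cdot,\cdot,\cdot]')$; (iii) $[\cdot,\cdot,\cdot]'$ satisfies the Nambu identity \eqref{LHSminusRHSTernaryNambuIdentity}, so that $(V,[\cdot,\cdot,\cdot]')$ is a genuine ternary Nambu-Lie algebra; and (iv) $\alpha\circ[\cdot,\cdot,\cdot]'=[\cdot,\cdot,\cdot]$. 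Items (i) and (iv) are immediate: skew-symmetry is inherited from $[\cdot,\cdot,\cdot]$ since $\alpha^{-1}$ is linear, and $\alpha\circ[\cdot,\cdot,\cdot]'=\alpha\circ\alpha^{-1}\circ[\cdot,\cdot,\cdot]=[\cdot,\cdot,\cdot]$.

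For (ii) I would use that $\alpha$ intertwines the original bracket to compute, for all $x_1,x_2,x_3\in V$,
\[
\alpha([x_1,x_2,x_3]')=[x_1,x_2,x_3]=[\alpha(x_1),\alpha(x_2),\alpha(x_3)]',
\]
where the first equality is the definition of $[\cdot,\cdot,\cdot]'$ and the second follows from $\alpha^{-1}([\alpha(x_1),\alpha(x_2),\alpha(x_3)])=\alpha^{-1}(\alpha([x_1,x_2,x_3]))=[x_1,x_2,x_3]$. Thus $\alpha$, and hence $\alpha^{-1}$, is a ternary algebra automorphism of $(V,[\cdot,\cdot,\cdot]')$; in particular $\alpha^{-1}$ commutes with $[\cdot,\cdot,\cdot]$ and is injective, the two facts already isolated above and needed in the next step.

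The substantive point is (iii), and it is exactly what the chain \eqref{HNidAlphaInvert}--\eqref{HNidAlphaInvert3} above accomplishes once specialized to $\rho=\alpha$. Starting from the Hom-Nambu identity $HFI(x_1,\dots,x_5)=0$, one substitutes $\alpha^{-1}(x_i)$ for each $x_i$ to obtain \eqref{HNidAlphaInvert}; using that $\alpha^{-1}$ commutes with the bracket, the nested triple brackets collapse to the form $\alpha^{-1}([x_i,x_j,x_k])$, giving \eqref{HNidAlphaInvert2}; and writing out the Nambu defect $FI'$ of $[\cdot,\cdot,\cdot]'$ with the help of $[a,b,c]'=\alpha^{-1}([a,b,c])$ shows that $FI'(x_1,\dots,x_5)$ is exactly $\alpha^{-1}$ applied to the (vanishing) left-hand side of \eqref{HNidAlphaInvert2}. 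Since $\alpha^{-1}$ is injective this forces $FI'\equiv 0$, so $(V,[\cdot,\cdot,\cdot]')$ is a ternary Nambu-Lie algebra, and Theorem \ref{ThmConstrHomNambuLie} applied with this algebra and $\rho=\alpha$ delivers $(V,\alpha\circ[\cdot,\cdot,\cdot]',(\alpha,\alpha))=(V,[\cdot,\cdot,\cdot],(\alpha,\alpha))$ as the induced Hom-Nambu-Lie algebra, which is the claim. The only part calling for care is the bookkeeping in this last rewriting — tracking which triple brackets the endomorphism property of $\alpha^{-1}$ is applied to, and checking that $\alpha^{-1}$ really factors out of every summand — but this is routine and presents no genuine obstacle.
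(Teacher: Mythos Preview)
Your proposal is correct and follows essentially the same approach as the paper: the discussion preceding the proposition (equations \eqref{HNidAlphaInvert}--\eqref{HNidAlphaInvert3}) already constitutes the paper's proof, and you correctly specialize it to $\rho=\alpha$, adding the explicit checklist (i)--(iv) and the observation that $FI'$ is $\alpha^{-1}$ applied to the left-hand side of \eqref{HNidAlphaInvert2}. Your organization is in fact a bit cleaner than the paper's, since you directly verify that $(V,[\cdot,\cdot,\cdot]')$ is Nambu-Lie and that $\alpha$ is an endomorphism of it, whereas the paper reaches the same conclusion somewhat more obliquely by positing a general $\rho$ and then observing that $\alpha^{-1}\circ\rho=\mathrm{id}$ makes \eqref{HNidAlphaInvert3} the Nambu identity.
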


\begin{remark}
We observe that ternary Hom-Nambu-Lie $q$-Virasoro-Witt algebras
constructed in Theorem \ref{ThnHomNambuLieVW} could be untwisted
by using the linear map
\begin{equation}
\nu(Q_n)= q^{-n} Q_n \quad \nu(R_n)= q^{-n} R_n,
\end{equation}
since $\alpha_1=\alpha_2$ and $\alpha_1^{-1}=\alpha_2^{-1}=\nu$.
The ternary Hom-Nambu-Lie $q$-Virasoro-Witt algebras constructed
in Theorem \ref{thmHNL2} could not be untwisted in such a way (for $\beta_1=\beta_2$), as the linear maps
are nilpotent and thus are non-invertible. It would be of interest to know whether for $\beta_1=\beta_2$ it is possible to untwist these algebras in some other subtle ways.
\end{remark}

\end{document}